\newtheorem{thm}{Theorem}
\newtheorem{cor}[thm]{Corollary}
\newtheorem{lem}[thm]{Lemma}
\theoremstyle{definition}
\newtheorem{rem}[thm]{Remark}
\begin{document}
\baselineskip=17pt
\title{\bf  Zero $f$-mean curvature surfaces of revolution in  the Lorentzian product $\Bbb G^2\times\Bbb R_1$}
 \small{}
\author[a]{ Doan The Hieu}
\author[b]{Tran Le Nam}
\affil[a]{College of Education, Hue University, Hue, Vietnam}
\affil[b]{Dong Thap University, Dong Thap, Vietnam}
 \maketitle
\begin{abstract} We classify (spacelike or timelike) surfaces  of revolution with zero $f$-mean curvature in $\Bbb G^2\times\Bbb R_1,$ the Lorentz-Minkowski 3-space  $\Bbb R^3_1$ endowed with the Gaussian-Euclidean density $e^{-f(x,y,z)}=\frac 1{2\pi}e^{-\frac{x^2+y^2}2}.$
  It is proved that  an $f$-maximal surface of revolution is either  a horizontal plane  or a spacelike $f$-Catenoid. For the timelike case, a timelike $f$-minimal surface is either a vertical plane containing $z$-axis, the cylinder $x^2+y^2=1,$  or a timelike $f$-Catenoid. Spacelike and timelike $f$-Catenoids are new examples of $f$-minimal surfaces in $\Bbb G^2\times \Bbb R_1.$
 \end{abstract}
\noindent {\bf AMS Subject Classification (2000):}
 {Primary 53C25; Secondary 53A10; 49Q05 }\\
{\bf Keywords:} {Lorentzian product spaces, Gauss space, $f$-maximal surfaces, timelike $f$-minimal surfaces, spacelike $f$-Catenoid, timelike $f$-Catenoid} \vskip 1cm

\section{Introduction}

In $\Bbb R^3,$  together with the plane, Catenoid is the only minimal surface of revolution.
If not counting the plane, it is the first minimal surface  discovered by Leonhard Euler in 1744.
The counterpart of minimal surfaces in the Lorentz-Minkowski space $\Bbb R^3_1,$ are  (spacelike or timelike) surfaces with zero mean curvature. 
Since the metric in $\Bbb R^3_1$ is not positive definite, there are three types of vectors (spacelike, lightlike or timelike). Therefore, more complicated than rotations in Euclidean space, in $\Bbb R^3_1,$ there are three types of Lorentzian rotations depending on the causal of the rotation axises. Maximal surfaces of revolution in $\Bbb R^3_1$ have been classified in \cite{ko}.  Spacelike and timelike surfaces of revolution with constant mean curvature in $\Bbb R^3_1$ have been studied in \cite{leva}, \cite{leva1} and  \cite{lema}.  Recently, maximal surfaces in Lorentzian product spaces have been also studied by some authors (see, for example, \cite{al}, \cite{alal1}, \cite{alal2}, \cite{li} and \cite{li1}).

It is natural to study (spacelike or timelike) surfaces of revolution with zero weighted mean curvature, also called $f$-mean curvature, in $\Bbb R^3_1$ endowed with a density, i.e.,  a positive function defined on $\Bbb R^3_1$ used to weight the area  (the length) of surfaces (curves). 

In this paper, such a density that we considered is the Gaussian-Euclidean density, i.e., the space is the Lorentzian product $\Bbb G^2\times \Bbb R_1,$  where $\Bbb G^2$ is the Gauss plane.
The space $\Bbb G^2\times\Bbb R_1$ is a special case of $n$-dimensional spacetime with a density that does not affect ``time''. It should be mentioned  that the space we are living can be seen as a  4-dimensional spacetime with density, the gravity,  that affect  ``space'' and does not affect ``time''.

It is showed that the axis of an $f$-maximal surface of revolution in $\Bbb G^2\times\Bbb R_1$ must be the $z$-axis. Then, solving the $f$-Maximal Surface Equation for surfaces of revolution we obtain new non-trivial examples,  called spacelike $f$-Catenoids. Beside horizontal planes, they are the only $f$-maximal surfaces of revolution.  This is the first result of the paper. 

For the timelike case, by a similar proof, it is proved that  the axis of a timelike $f$-minimal surface of revolution must be the $x$-axis or the  $z$-axis. If the rotation axis is the $x$-axis, the only timelike $f$-minimal surfaces of revolution are vertical planes containing the $z$-axis.
If the rotation axis  is the $z$-axis, there are a family of  timelike $f$-minimal surfaces of revolution, called timelike $f$-Catenoids, that convergences to another timelike $f$-minimal surface, the cylinder $x^2+y^2=1.$   The second main result of the paper is that a timelike $f$-minimal surface of revolution is either the cylinder $x^2+y^2=1,$ a vertical plane containing the $z$-axis or a timelike $f$-Catenoid. 

\section{Preliminaries} 
For simplicity, all concepts as well as results in this section are introduced in 3-dimensional space. For more details about Lorentz-Minkowski 
spaces, manifolds with density or the Gauss space we refer the reader to \cite{lo1}, \cite{mo1}, \cite{mo2}, \cite{nei}, \cite{we} and references therein. 

Let $\Bbb R^{3}_1$ be the Lorentz-Minkowski 3-space endowed with the Lorentzian scalar product
$$\langle \ , \ \rangle=dx^2+dy^2-dz^2.$$
A nonzero
vector $\textbf x\in \Bbb R^{3}_1$ is called spacelike, lightlike  or timelike    if  $\langle \textbf x, \textbf
x\rangle>0$, \ $\langle \textbf x, \textbf x\rangle=0,$ or  $\langle \textbf x, \textbf x\rangle<0,$ respectively.

The norm of the vector $\textbf x$ is then defined by $\|\textbf x\|=\sqrt{|\langle\textbf x, \textbf x\rangle|}.$
Two vectors $\textbf x_1=(x_1,y_1, z_1),\  \textbf x_2=(x_2, y_2, z_2)\in \Bbb R^{3}_1$  are said to be orthogonal if $\langle \textbf x_1, \textbf x_2\rangle=0,$ i.e., $x_1x_2+y_1y_2-z_1z_2=0.$ The 
 Lorentzian vector product of $\textbf x_1$ and $\textbf x_2,$  denoted by $\textbf x_1\wedge\textbf x_2,$  is defined by
$$\textbf x_1\wedge\textbf x_2=\begin{vmatrix} \textbf e_1&\textbf e_2&-\textbf e_3\\
x_1&y_1&z_1\\
x_2&y_2&z_2\end{vmatrix}, $$
where  $\{\textbf e_1,\textbf e_2,\textbf e_3\}$ is the canonical basis of $\Bbb R^{3}_1.$
For every $\textbf x\in \Bbb R^{3}_1,$
$$\langle \textbf x, \textbf x_1\wedge\textbf x_2\rangle=\det( \textbf x, \textbf x_1,\textbf x_2).$$
It follows that $\textbf x_1\wedge\textbf x_2$ is orthogonal to both
$\textbf x_1$ and $\textbf x_2.$


A surface  in  $\Bbb R^{3}_1$ is called spacelike (timelike) if its induced metric
from $\Bbb R^{3}_1$ is Riemannian (Lorentzian) or equivalently,  every normal vector of the surface is timelike (spacelike).   

For example, let $\alpha$ be a plane whose general equation is $Ax+By+Cz+D=0, \ A^2+B^2-C^2\ne 0.$ It is easy to see that, the vector 
${\bf n}=(A,B,-C)$ is a  normal vector of $\alpha.$ The plane $\alpha$ is spacelike or timelike if and only if $\bf n$ is timelike or spacelike, respectively.

The following formula for computing the mean curvature of a (spacelike or timelike) surface in $\Bbb R^3_1$  is well-known (see \cite{lo2}, for instance)
\begin{align}\label{mean}
H=\epsilon\frac{Eg-2 Ff+Ge}{2(EG-F^2)},
\end{align}
where $\epsilon=-1,$ if the surface is spacelike; $\epsilon=1,$ if the surface is timelike; $E, F, G$ are the coefficients of the first fundamental form and $e,f,g$  are the coefficients of the second fundamental form.

A spacelike (timelike) surface is called maximal (timelike minimal) if its mean curvature $H$ is zero everywhere.  

There are three kinds of rotations in $\Bbb R^3_1$: rotations about a spacelike axis, rotations about a timelike axis and rotations about a lightlike axis  (see \cite{leva}, for instance).  Below are the matrices of some typical kinds of rotations that will be used in the proof of Lemma \ref{sf} and Lemma \ref{tf}.
\begin{enumerate}
\item The matrix corresponding to a rotation about the $y$-axis is 
$$	\begin{pmatrix}
	\cosh \theta & 0 & \sinh\theta\\
	0&1&0\\
\sinh\theta & 0& \cosh \theta
\end{pmatrix}. $$
\item The matrix corresponding to a rotation about the lightlike axis  $x=z,\ y=0$  is 
\begin{align*}
	\begin{pmatrix}
	1-\dfrac{v^2}{2}&-v&\dfrac{v^2}{2}\\
	v&1&-v\\
	\dfrac{-v^2}{2}&-v&1+\dfrac{v^2}{2}
	\end{pmatrix}.
\end{align*}
\item The matrix corresponding to a rotation about the $z$-axis is 
$$	\begin{pmatrix}
	\cos\theta &-\sin\theta &0\\
\sin\theta &  \cos \theta &0\\
0&0&1\\
\end{pmatrix}. $$
\end{enumerate}

A surface of revolution is a surface in $\Bbb R^3_1$ obtained by rotating a  curve $\gamma,$ the generatrix, around an axis of rotation $l,$  assuming that $\gamma$ and $l$ are in a plane. 

A density on $\Bbb R^{3}_1$ is a positive function, denoted by $e^{-f},$  used to weight area (length) of surfaces (curves).
The weighted mean curvature or the $f$-mean curvature of a (spacelike or timelike) surface,  denoted by $H_f,$ is defined by 
$$H_f=H+\frac 12\langle  \nabla f, N\rangle.$$
 A spacelike (timelike) surface $\Sigma$ is called $f$-maximal (timelike $f$-minimal) if  $H_f=0$ everywhere, i.e., $H=-\frac 12\langle \nabla f, N\rangle.$ 

Gauss space $\Bbb G^2,$ is  just $\Bbb R^2$ with the Gaussian probability density
$$e^{-f(x,y)}=\frac 1{2\pi}e^{-\frac{x^2+y^2}2},$$
where $ (x,y)\in\Bbb G^2.$
  
Therefore, the Lorentzian product $\Bbb G^2\times\Bbb R_1$ can be seen as  $\Bbb R^{3}_1=\Bbb R^2\times\Bbb R_1$ endowed with the Gaussian-Euclidean density
$$e^{-f(x,y,z)}=\frac 1{2\pi}e^{-\frac{x^2+y^2}2},$$
where $ (x,y,z)\in\Bbb G^2\times \Bbb R_1.$ It should be noted that the last coordinate is not dependent on the density.

Let $\Sigma$ be an oriented  (spacelike or timelike) surface in $\mathbb{G}^2\times\mathbb{R}_1,$ \ $N$ be a unit normal vector field  on $\Sigma$ and 
 $\rho$  be the projection onto  the $z$-axis. Then  at any point $p\in \Sigma,$  we have the following.
\begin{lem}\label{lem1}(Geometric meaning of the quantity $\langle\nabla f,\mathbf{N}\rangle$)
\begin{align*}
\left| \langle\nabla f,\mathbf{N}\rangle (p)\right|=\operatorname{d_E}\big(\rho(p),T_p\Sigma\big).|\mathbf{N}|_{\operatorname{E}},
\end{align*}
where $d_E$ and $| \ \  |_E$ denote the Euclidean distance the Euclidean length, respectively.
\end{lem}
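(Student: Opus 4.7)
The plan is to reduce the identity to the standard Euclidean distance-from-a-point-to-a-plane formula, exploiting the fact that the Gauss density depends only on the horizontal variables $x,y$.

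First I would fix $p=(x_0,y_0,z_0)\in\Sigma$ and compute $\nabla f(p)=(x_0,y_0,0)$; since $f$ has no $z$-dependence, this gradient is the same whether one raises the index with the Lorentzian or with the Euclidean metric, and as a free vector it equals $p-\rho(p)$. A useful byproduct is that \emph{any} inner product of a purely horizontal vector with any other vector agrees in the two metrics, because $\langle\cdot,\cdot\rangle$ and $\langle\cdot,\cdot\rangle_E$ differ only in the $z$-slot.

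Next I would convert the Lorentzian unit normal $\mathbf{N}=(N_1,N_2,N_3)$ into a Euclidean normal of the \emph{same} tangent plane. The Lorentzian orthogonality $\langle \mathbf{N},\mathbf v\rangle = N_1v_1+N_2v_2-N_3v_3=0$ for every tangent $\mathbf v$ is literally the same linear equation as the Euclidean orthogonality of $\mathbf{N}_E:=(N_1,N_2,-N_3)$ against those same tangent vectors; hence $\mathbf{N}_E$ is Euclidean-normal to $T_p\Sigma$, and trivially $|\mathbf{N}_E|_E=|\mathbf{N}|_E$. Applying the standard Euclidean point-to-plane distance formula to the affine plane $p+T_p\Sigma$ gives
\[
d_E\!\big(\rho(p),T_p\Sigma\big)=\frac{\big|\langle p-\rho(p),\mathbf{N}_E\rangle_E\big|}{|\mathbf{N}_E|_E}.
\]
Both the numerator here and $\langle\nabla f,\mathbf{N}\rangle(p)$ collapse to $x_0N_1+y_0N_2$ (the $z$-contribution vanishes in each computation, for different reasons), and multiplying through by $|\mathbf{N}|_E$ yields the claimed equality.

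The proof is essentially bookkeeping; the only point that could mislead is that the Lorentzian normal $\mathbf N$ is not itself Euclidean-normal to $T_p\Sigma$, so one must pass to $\mathbf N_E$ before invoking the Euclidean distance formula. That passage is precisely what makes the \textbf{Euclidean} length of $\mathbf N$ (rather than its Lorentzian norm) appear on the right-hand side of the stated identity.
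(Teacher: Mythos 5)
Your proof is correct and takes essentially the same route as the paper's: the paper writes the affine tangent plane as $ax+by-cz+d=0$ with $\mathbf N=(a,b,c)$, notes $\nabla f(p)=(x_0,y_0,0)$, and applies the coefficient form of the Euclidean point-to-plane distance formula, which is exactly your passage to the Euclidean normal $\mathbf N_E=(a,b,-c)$ in vector form. Your identification $\nabla f(p)=p-\rho(p)$ is a pleasant way to phrase the same bookkeeping, but the argument is the same.
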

\begin{proof}
Suppose that $p=(x_0, y_0, z_0)$ and $\mathbf{N}(p)=(a,b,c),\ a^2+b^2-c^2=\pm 1.$ An equation of $T_p\Sigma$ is of the form $ax+by-cz+d=0.$
We have $\nabla f(p)=(x_0,y_0,0)$ and $\rho(p)=(0,0,z_0).$ Therefore 
$$\left| \langle\nabla f,\mathbf{N}\rangle (p)\right|=|ax_0+by_0|=|cz_0-d|=\operatorname{d_E}\big(\rho(p),T_p\Sigma\big).|\mathbf{N}|_{\operatorname{E}}.$$
\end{proof}
By Lemma \ref{lem1},  it is not hard to prove the followings.
\begin{cor} \label{co1} In  $\mathbb{G}^2\times\mathbb{R}_1,$
\begin{enumerate}
\item horizontal planes  are $f$-maximal surfaces; 
\item vertical planes have constant $f$-mean curvature, such a plane containing the $z$-axis is timelike $f$-minimal; 
\item circular cylinders about the $z$-axis  have constant $f$-mean curvature, such a cylinder is timelike $f$-minimal if and only if the radius is 1.
\end{enumerate}
\end{cor}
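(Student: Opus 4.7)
The plan is to verify each of the three claims by combining the definition $H_f = H + \tfrac{1}{2}\langle \nabla f, \mathbf{N}\rangle$ with Lemma \ref{lem1}, which rewrites $|\langle \nabla f, \mathbf{N}\rangle(p)|$ as the Euclidean distance from $\rho(p) = (0,0,z_0)$ to the affine tangent plane $T_p\Sigma$, multiplied by $|\mathbf{N}|_E$. In every case below the tangent plane admits an explicit global equation, so this reduction essentially trivialises the computation of $\langle \nabla f, \mathbf{N}\rangle$.

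For (1), I would observe that a horizontal plane $z = c$ is spacelike with $H \equiv 0$, and its tangent plane at any point coincides with the surface itself and therefore contains $\rho(p) = (0,0,c)$. Hence Lemma \ref{lem1} forces $\langle \nabla f, \mathbf{N}\rangle \equiv 0$, so $H_f \equiv 0$.

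For (2), I would take a vertical plane $\alpha : Ax + By + D = 0$ with $A^2 + B^2 \neq 0$; the normal-vector discussion preceding the mean-curvature formula shows $\alpha$ is timelike, and $H \equiv 0$ since $\alpha$ is flat. The Euclidean distance from $\rho(p) = (0,0,z_0)$ to $\alpha$ is $|D|/\sqrt{A^2+B^2}$, independent of $p$, so Lemma \ref{lem1} gives $|\langle \nabla f, \mathbf{N}\rangle|$ constant, hence $H_f$ constant. When $D = 0$, i.e.\ when $\alpha$ contains the $z$-axis, this constant is $0$, so $\alpha$ is timelike $f$-minimal.

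For (3), I would parametrise the cylinder $x^2 + y^2 = r^2$ by $X(u,v) = (r\cos u, r\sin u, v)$. A short computation of the Lorentzian fundamental forms ($E = r^2$, $F = 0$, $G = -1$, $e = -r$, $f = g = 0$, $\epsilon = 1$ as the surface is timelike) yields $H = -\tfrac{1}{2r}$, constant. Meanwhile the tangent plane at $p = (x_0,y_0,z_0)$ is $x_0 x + y_0 y = r^2$, whose Euclidean distance from $\rho(p) = (0,0,z_0)$ equals $r$, and $|\mathbf{N}|_E = 1$. With the outward choice of unit normal, Lemma \ref{lem1} yields $\langle \nabla f, \mathbf{N}\rangle = r$, so $H_f = -\tfrac{1}{2r} + \tfrac{r}{2} = \tfrac{r^2-1}{2r}$, constant, and vanishing precisely when $r = 1$. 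I do not anticipate a real obstacle; the one piece of bookkeeping that requires care is keeping orientations consistent — in particular ensuring in (3) that the outward $\mathbf{N}$ makes $\langle \nabla f, \mathbf{N}\rangle$ positive, so that the cancellation with $H = -\tfrac{1}{2r}$ produces $\tfrac{r^2-1}{2r}$ rather than $\tfrac{r^2+1}{2r}$, which would never vanish.
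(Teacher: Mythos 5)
Your proposal is correct and follows exactly the route the paper intends: the paper states only that the corollary follows from Lemma \ref{lem1} and leaves the details to the reader, and your verification of each case (flatness giving $H=0$ for the planes, the explicit tangent-plane distances via Lemma \ref{lem1}, and the fundamental-form computation $H=-\tfrac{1}{2r}$ together with $\langle\nabla f,\mathbf{N}\rangle=r$ for the cylinder) supplies precisely those details, with the correct conclusion $H_f=\tfrac{r^2-1}{2r}$. Your remark on keeping the orientation of $\mathbf{N}$ consistent between $H$ and $\langle\nabla f,\mathbf{N}\rangle$ is apt, though the final criterion $r=1$ is unaffected by the choice since both terms flip sign together.
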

\section{Spacelike $f$-maximal surfaces of revolution in $\Bbb G^2~\times~\Bbb R_1$}
\subsection{Spacelike $f$-Catenoids in $\Bbb G^2\times\Bbb R_1$}

 In the $xz$-plane, consider the curve $\gamma_S$ that is the graph of the function (see Figure 1).
$$h(u)= \int_{0}^{u}\sqrt{\dfrac{1}{1+\tau^2e^{\tau^2+C}}}d\tau,\ \ u\in\Bbb R.$$

 Rotating  $\gamma_S$ the about the $z$-axis, we obtain a surface of revolution (see Figure 2), denoted by  $\Sigma_S,$  that can be parametrized as follows.
$$	X(u,v)=\left(u\cos v, u\sin v, \int_{0}^{u}\sqrt{\dfrac{1}{1+\tau^2e^{\tau^2+C}}}d\tau\right),$$
where $C$ is a constant. It is easy to verify that the curve is spacelike and therefore the surface $\Sigma_S$ is spacelike. The surface $\Sigma_S$ has a singular point, that is the origin. 
By a direct computation, it follows that the $f$-mean curvature  of  $\Sigma_S$ is zero, i.e., $\Sigma_S$ is $f$-maximal. We call $\Sigma_S$  a spacelike $f$-Catenoid. 
\vskip 1cm
\begin{center}
	\includegraphics{DuongSinhSpacelike-1}\\
	\textit{Figure 1.\ The generatrix $\gamma$}
\end{center}

\begin{center}
	\includegraphics{DuongSinhSpacelike-2}\\
	\textit{Figure 2.\ Spacelike $f$-Catenoid.}
\end{center}
		
\subsection{Classification of $f$-maximal surfaces of revolution in $\Bbb G^2\times\Bbb R_1$}
In the Lorentz-Minkowski space $\Bbb R^3_1,$ because the mean curvature of a (spacelike or timelike) surfaces is invariant under Lorentzian transformation, when studying surfaces of revolution of constant mean curvature, if the rotation axis is timelike, spacelike or lightlike we can suppose it is the $z$-axis, the $x$-axis, or the lightlike axis $x=z, y=0,$ respectively. In the space $\Bbb G^2\times \Bbb R_1,$ with the appearance of the density, we can not do this because the $f$-mean curvature is not invariant under some Lorentzian transformations. Since the density is dependent on the distance from points to the $z$-axis and not dependent on the last coordinate, the $f$-mean curvature of a surface does not change under rotations about as well as  translations along the $z$-axis (see Lemma (\ref{lem1}). This observation is useful for the rest of the paper to simplify some calculations.

\begin{lem}\label{sf} A spacelike surface of revolution $\Sigma$ in $\Bbb G^2\times\Bbb R_1$ can be parametrized as follows.
\begin{enumerate}
\item If the rotation axis is spacelike 
\begin{align}\label{ss} X(u,v)=(u\cosh\theta+g(u)\sinh \theta\cosh v, u\sinh v+a, u\sinh\theta+g(u)\cosh\theta\cosh v).
\end{align}
\item If the rotation axis is lightlike
\begin{align}\label{ls}
X(u,v)=\left(u-\left[u-g(u)\right]\dfrac{v^2}{2}+a,v[u-g(u)],g(u)-\left[u-g(u)\right]\dfrac{v^2}{2}\right).
\end{align}
\item If the rotation axis is timelike
\begin{align}\label{ts}
X(u,v)=(u\cos v\cosh\theta+g(u)\sinh\theta, u\sin v +a,u\cos v\sinh\theta+g(u)\cosh\theta).
\end{align}
\end{enumerate}
\end{lem}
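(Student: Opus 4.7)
My plan is to exploit the density-preserving symmetry group---rotations about and translations along the $z$-axis, singled out in the paragraph preceding the lemma---to normalize the axis of revolution $\ell$ to a canonical form in each causal class, and then apply one of the three typical rotation matrices recorded in the preliminaries to a generatrix $\gamma$ that lies in a plane through $\ell$.

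The first step is to normalize $\ell$. Lines in $\Bbb R^3$ form a four-parameter family and the density-preserving group is two-dimensional, so a two-parameter family of canonical axes remains in each causal class. A rotation about the $z$-axis puts the direction of $\ell$ in the $xz$-plane, giving $(\cosh\theta,0,\sinh\theta)$ in the spacelike case, $(1,0,1)$ (up to sign) in the lightlike case, and $(\sinh\theta,0,\cosh\theta)$ in the timelike case. A translation along the $z$-axis then shifts $\ell$ parallel to that direction, and a reparametrization of $\ell$ itself absorbs the residual translation along the axis; the net effect is that in the spacelike and timelike cases $\ell$ passes through $(0,a,0)$ with the stated direction, while in the lightlike case it becomes the line $x-z=a$, $y=0$. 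The two free parameters $\theta$ and $a$ reflect the dimension count $4-2=2$.

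Having fixed $\ell$, I would work in a basis $\{e_{\parallel},e_2,e_3'\}$ adapted to it: $e_\parallel$ along $\ell$, and $e_2, e_3'$ spanning the Lorentz-orthogonal plane. In the spacelike and lightlike cases one of the perpendicular vectors is timelike, so the induced rotation of the orthogonal plane is a hyperbolic (resp.\ parabolic) boost; in the timelike case both perpendicular vectors are spacelike and the rotation is Euclidean. Writing the generatrix as $\gamma(u)=p_0+u\,e_{\parallel}+g(u)\,e_3'$ (with the sign and position conventions chosen to match the statement), applying the appropriate rotation in the $(e_2,e_3')$ plane, and converting back to the standard basis of $\Bbb R^3_1$ produces the three formulas (1), (2) and (3); the $\cosh\theta,\sinh\theta$ entries come from the change of basis, while the $\cosh v,\sinh v$ (or $1\pm v^2/2,\pm v$) entries come from the rotation itself. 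The main obstacle is the bookkeeping: the tabulated matrices describe rotations about the \emph{coordinate} axes, not the tilted canonical $\ell$, so I must either conjugate each matrix by an isometry sending the coordinate axis to $\ell$, or change to the adapted basis, apply the tabulated matrix, and change back. Carrying the parameters $\theta$ and $a$ through this calculation, and matching the precise placement of $u$ versus $g(u)$ in each coordinate of (1)--(3), is where most of the routine but error-prone work will live.
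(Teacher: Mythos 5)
Your proposal is correct and follows essentially the same route as the paper: the authors likewise use a rotation about (and, implicitly, a translation along) the $z$-axis to put the axis $l$ and the generatrix into a normalized position with the two residual parameters $\theta$ and $a$, then realize $\Sigma$ as the image of a standard surface of revolution about a coordinate axis under the explicit isometry (hyperbolic rotation about the $y$-axis composed with a translation) carrying that coordinate axis to $l$ --- precisely the ``conjugate the tabulated matrix by an isometry'' option you describe. The remaining work is the same matrix bookkeeping you anticipate.
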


\begin{proof}
\begin{enumerate}
\item {\bf The case $l$ is spacelike}.
Under a suitable rotation about the $z$-axis,  we can assume that the plane containing the generatrix $\gamma$ and the rotation axis $l$ are parallel to or coincident with the $xz$-plane.  If $l$ and the $z$-axis are not intersect, we assume that the common perpendicular line of $l$ and the $z$-axis is the $y$-axis. If  $l$ and the $z$-axis are intersect, we assume that the intersection point is the origin $O.$ Let $\{H\}=(0,a,0)$ be the intersection point of $l$ and $xy$-plane and let  $\theta$ be the angle between $l$ and $Ox.$ 

There exist a Lorentz transformation that maps $\Sigma$ to a surface of revolution $\Sigma_1$ obtained by rotating a spacelike $\gamma_1,$ that lies in the $xz$-plane, about the $x$-axis. 
This transformation is a composition of a translation along $y$-axis by a vector $v=(0, a, 0)$ and a rotation about $y$-axis of angle $\theta.$ 
Because the curve $\gamma_1$ is spacelike, it can be parametrized as $\gamma_1(u)=(u,0,g(u)),\ u\in I\subset\Bbb R,\ g\ne 0,\ 1-g'^2>0.$ Then, a parametrization  of $\Sigma_1$ is $X(u,v)= (u, g(u)\sinh, g(u)\cosh v)$ and therefore a parametrization of $\Sigma$ is 
\begin{align*}
	X(u,v)&=\begin{pmatrix}
	\cosh \theta & 0 & \sinh\theta\\
	0&1&0\\
\sinh\theta & 0& \cosh \theta
\end{pmatrix}
\begin{pmatrix}
u\\ g(u)\sinh v\\
  g(u)\cosh v
\end{pmatrix} + \begin{pmatrix}
 0\\
a\\
 0
\end{pmatrix}\\
&=(u\cosh\theta+g(u)\sinh \theta\cosh v, g(u)\sinh v+a, g(u)\sinh\theta+g(u)\cosh\theta\cosh v)
\end{align*}
\item {\bf The case $l$ is lightlike}. 

By a suitable rotation about the $z$-axis, we can assume that the plane containing the generatrix $\gamma$ and the rotation axis $l$ is the $xz$-plane and $l$ is parallel to $e_1+e_3.$   Let $\{H\}=(a,0,0)$ be the intersection point of $l$ and the $xy$-plane and suppose that $\gamma(u)=(u, 0, g(u)).$ Then,  a parametrization of $\Sigma$ is

\begin{align*}
X(u,v)&=\begin{pmatrix}
	1-\dfrac{v^2}{2}&-v&\dfrac{v^2}{2}\\
	v&1&-v\\
	\dfrac{-v^2}{2}&-v&1+\dfrac{v^2}{2}
	\end{pmatrix}\begin{pmatrix}
u\\ 0\\ g(u)
\end{pmatrix} + \begin{pmatrix}
a\\
0\\
 0
\end{pmatrix}\\
&=\left(u-\left[u-g(u)\right]\dfrac{v^2}{2}+a,v[u-g(u)],g(u)-\left[u-g(u)\right]\dfrac{v^2}{2}\right).
\end{align*}
\item {\bf The case $l$ is timelike}. 

By the same arguments as in the case $l$ is spacelike, but in this case, $\theta$  is the angle between $l$ and the $z$-axis and $\Sigma_1$ is the surface of revolution obtained by rotating $\gamma_1$ about the $z$-axis. 

A parametrization of $\Sigma_1$ is  $(u\cos v, u\sin v, g(u)).$ Therefore, a parametrization of $\Sigma$ is
\begin{align*}X(u,v)&=	\begin{pmatrix}
	\cosh \theta & 0 & \sinh\theta\\
	0&1&0\\
\sinh\theta & 0& \cosh \theta
\end{pmatrix}\begin{pmatrix}
u\cos v\\  u\sin v\\ g(u)
\end{pmatrix} + \begin{pmatrix}
 0\\
a\\
 0
\end{pmatrix}\\
&=(u\cos v\cosh\theta+g(u)\sinh\theta, u\sin v +a, u\cos v\sinh\theta+g(u)\cosh\theta).
\end{align*}
\end{enumerate}
\end{proof}
\begin{thm}\label{thm1} 
 An $f$-maximal surface of revolution $\Sigma$  in $\Bbb G^2\times\Bbb R_1$ is either a horizontal plane or a spacelike $f$-Catenoid.
 \end{thm}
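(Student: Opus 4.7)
The plan is to work case by case through Lemma~\ref{sf}, and in each of its three parametrizations to impose $H_f \equiv 0$ as an identity in the parameters $(u,v)$. The underlying geometric point is that $\nabla f = (x,y,0)$ is invariant only under rotations about, and translations along, the $z$-axis; any departure of the rotation axis from the $z$-axis therefore breaks the $v$-symmetry that $H$ enjoys for a surface of revolution. This rigidity is what should force the axis of an $f$-maximal surface to coincide with the $z$-axis and what will leave only a one-parameter family of nontrivial profiles.

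I would begin with the timelike-axis parametrization \eqref{ts}. Computing $X_u, X_v$ and the second derivatives, forming $N$, and using formula \eqref{mean} for $H$ together with $\nabla f(p) = (x_p, y_p, 0)$, the equation $H_f = 0$ becomes the vanishing of a trigonometric polynomial in $v$ whose coefficients depend on $u,\theta,a$, and $g$. Requiring this identity for all $(u,v)$ forces first $\sinh\theta = 0$ (the axis is not tilted) and then $a = 0$ (the axis passes through the origin). The parametrization therefore reduces to $X(u,v) = (u\cos v,\, u\sin v,\, g(u))$ with the spacelike condition $1 - g'(u)^2 > 0$, and $H_f = 0$ collapses to the single ODE
\begin{equation*}
u\, g''(u) + g'(u)\bigl(1 - g'(u)^2\bigr)\bigl(1 + u^2\bigr) = 0.
\end{equation*}
Rewriting this as $g''/[g'(1-g'^{2})] = -1/u - u$ and integrating via partial fractions yields the first integral
\begin{equation*}
\frac{g'(u)}{\sqrt{1 - g'(u)^2}} = \frac{K}{u}\, e^{-u^2/2}, \qquad K \in \Bbb R.
\end{equation*}
If $K = 0$ then $g' \equiv 0$ and $\Sigma$ is a horizontal plane. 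If $K \ne 0$, squaring and solving for $g'(u)^2$ and then setting $C = -2\ln|K|$ gives $g'(u)^2 = 1/(1 + u^2 e^{u^2 + C})$; up to a vertical translation (an isometry of $\Bbb G^2 \times \Bbb R_1$), $\Sigma$ is precisely the spacelike $f$-Catenoid $\Sigma_S$ of Section~3.1.

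For the spacelike-axis \eqref{ss} and lightlike-axis \eqref{ls} parametrizations I would carry out the analogous expansion. Now $v$ enters through $\cosh v, \sinh v$ (respectively through powers of $v$), and matching the highest-order coefficients of $H_f \equiv 0$ produces an overdetermined system on $(g,\theta,a)$ that, together with the spacelikeness conditions $g \ne 0$ and $1 - g'^{2} > 0$ on the generatrix, admits no solution; these two axis configurations are therefore empty. The main obstacle will be the combinatorial bookkeeping in these last two cases: the formulas for $X_{uu}, X_{uv}, X_{vv}, N$ and for $H$ are bulky, and one must isolate the decisive coefficient that produces the contradiction cleanly without losing the spacelikeness constraint. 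Lemma~\ref{lem1} is the tool that keeps the $\langle \nabla f, N\rangle$ side geometrically transparent throughout, since it reduces that quantity to an explicit Euclidean distance from $\rho(p)$ to $T_p\Sigma$.
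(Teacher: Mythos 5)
Your proposal is correct and follows essentially the same route as the paper: the same case analysis over the three axis types of Lemma~\ref{sf}, the same rigidity argument that the $v$-independence of $H$ along rotation orbits forces all $v$-dependence of $\langle\nabla f,N\rangle$ to vanish (hence $\theta=a=0$ and a $z$-axis), and the same ODE $ug''+g'(1-g'^2)(1+u^2)=0$, which you integrate by partial fractions in $g'$ where the paper substitutes $h=g'^2$ --- both give $g'^2=1/(1+u^2e^{u^2+C})$. The only cosmetic difference is that the paper exploits the constancy of $H$ in $v$ to avoid computing $H$ at all in the tilted-axis cases, examining only $\partial_v\langle\nabla f,N\rangle$, which trims the bookkeeping you anticipate as the main obstacle.
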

\begin{proof}
 Since Lorentz transformations preserve the mean  curvature of surfaces,  along a coordinate curve $u=u_0,$  the mean curvature $H$ of $\Sigma$ is constant. 
Therefore if the $f$-mean curvature $H_f$ of $\Sigma$ is zero,  along any coordinate curve $u=u_0,$
$\langle \nabla f, N\rangle$ must be constant. This fact will be used to eliminate the case that the rotation axis $l$ is spacelike or lightlike. 
\end{proof}
The followings are obtained by straightforward computations.
\begin{itemize}
\item If $l$  is spacelike and (\ref{ss}) is a parametrization of $\Sigma,$ then
\begin{align*}
\langle \nabla f, N\rangle&=
\frac{1}{\sqrt{1-[g'(u)]^2}}[u\sinh\theta\cosh\theta\cosh v + g(u)(\sinh^2\theta\cosh^2v+\sinh^2v\\
&\;\;+g'(u)\sinh\theta\cosh v\cosh\theta)+ug'(u)\cosh^2\theta+a\sinh v{\sqrt{1-[g'(u)]^2}}].
\end{align*}
The condition ``$\langle \nabla f, N\rangle$ is not constant'' is equvalent to that ``for any $u$
$$Q:=\displaystyle\frac{\partial}{\partial v}\sqrt{1-[g'(u)]^2}\langle \nabla f, N\rangle,$$
must be zero for every $v.$'' 
By a straightforward computation, we obtain
\begin{align*}
Q
&=\dfrac{u\sinh2\theta}{2}\sinh v+ g(u)\left(\sinh^2\theta\sinh 2v+\sinh 2v+g'(u)\dfrac{\sinh2 \theta}{2}\sinh v \right) + a\cosh v\\
	&=g(u)\sinh 2v\cosh^2\theta+[u+g(u)g'(u)]\dfrac{\sinh2 \theta}{2}\sinh v+a\cosh v.
\end{align*}
It is not hard to see that  if  for any $u\in I,$\ $Q$  vanishes for every $v$ then $g(u)=0.$ This is impossible because $g\ne 0.$
\item If $l$  is lightlike  and (\ref{ls}) is a parametrization of $\Sigma$, then
\begin{align*}
	\langle \nabla f, N\rangle&=\frac 1{g'^2(u)-1}\left[u-\left[u-g(u)\right]\dfrac{v^2}{2}\right]\left[g'(u)+\dfrac{v^2}{2}\left[1-g'(u)\right]\right]+v^2[u-g(u)][g'(u)-1]\\
	&=\frac 1{{4}(g'^2(u)-1)}\left[(g(u)-u)(1-g'(u))v^4+2(u+g(u)g'(u)-2g(u))v^2\right]+ug'(u).
\end{align*}
	We can verify that $\langle \nabla f, N\rangle$ is not constant.
\item The case $l$ is timelike  and (\ref{ts}) is a parametrization of $\Sigma,$  
A direct computation shows that
\begin{align}
\label{eq2}
\langle \nabla f, N\rangle&=\dfrac{1}{\sqrt{1-[g'(u)]^2}}\left[ ug'(u)(1+\cos^2v\sinh^2\theta)+[u+g(u)g'(u)]\dfrac{\sinh2\theta}{2}\cos v+g(u)\sinh^2\theta\right]\nonumber\\
&\quad+\dfrac{ag'(u)\sin v}{\sqrt{1-[g'(u)]^2}}.
\end{align}
\end{itemize}
We can see that, for any $u,$
$\langle \nabla f, N\rangle$ is constant if and only if $\theta=a=0,$ i.e.,  $l$ must be the $z$-axis.
The parametrization of  $\Sigma$ is now become
\begin{align} 	X(u,v)=(u\cos v, u\sin v, g(u)).
\end{align} 

A direct computation shows that
\begin{align*}
	& H=\dfrac{-1}{2}\dfrac{(1-g'^2)g'+ug''}{u(1-g'^2)^{3/2}},\\
	&\langle \nabla f, N\rangle=\dfrac{-g'u}{\sqrt{1-g'^2}}.
\end{align*}
Therefore, $\Sigma$ is $f$-maximal if and only if $g$ satisfies the following equation
\begin{align}
\label{revolution1}
	(1-g'^2)g'+ug''+u^2g'(1-g'^2)=0.
\end{align}

We solve equation \eqref{revolution1}.
\begin{itemize}
\item It is clear that $g(u)=a,$  where $a$ is constant, is a solution of \eqref{revolution1}, i.e., $\Sigma$ is a vertical plane.
\item Now locally we can suppose that  $g'(u)\ne0$ for every $u\in J,$ where $J\subset I.$  Multiply both sides of \eqref{revolution1} by $g'$ and set $h=g'^2$, we get
\begin{align*}
- \dfrac{dh}{du}=2\dfrac{u^2+1}{u}h(1-h).
\end{align*}
Solving this equation, we obtain
\begin{align*}
 \ln\dfrac{1-h}{h}=u^2+\ln u^2 + C,\; C\in\mathbb{R},
\end{align*}
or
\begin{align*}
h=\dfrac{1}{1+u^2e^{u^2+C}}.
\end{align*}
Therefore,
\begin{align*}
 g'(u)=\pm \sqrt{\dfrac{1}{1+u^2e^{u^2+C}}},
\end{align*}
and
\begin{align*}
 g(u)=\pm\int_{u_0}^u\sqrt{\dfrac{1}{1+\tau^2e^{\tau^2+C}}}d\tau,\ \ u_0\in J.
\end{align*}
The function $g$ is defined over $\Bbb R,$  therefore we can assume that $I=\Bbb R, \  u_0=0,$ i.e.,
$\gamma\equiv\gamma_s$ or $\gamma\equiv\overline{\gamma_s},$ where $\overline{\gamma_s}$ is the graph of the function
$$\overline{h}(u)=-\int_{0}^u\sqrt{\dfrac{1}{1+\tau^2e^{\tau^2+C}}}d\tau,\ \ u\in \Bbb R.$$
It is clear that $\gamma_s$ and   $\overline{\gamma_s}$ generate the same surface of revolution $\Sigma_S.$
\end{itemize}

\section{Timelike $f$-minimal surfaces of revolution in $\Bbb G^2~\times~\Bbb R_1$}

\subsection {Timelike $f$-Catenoids in $\Bbb G^2\times \Bbb R_1$}\label{4.1}

In the $xz$-plane consider the curve $\gamma_T$ that is the graph of the function 
$$h(u)=\int_{u_0}^u {\sqrt{\dfrac{e^{\tau^2}}{e^{\tau^2}-C\tau^2}}d\tau },$$
where $C$ is a positive constant and $u_0$ belongs to the domain $D$ of the function.
The domain $D$ is determined by the following lemma.
\begin{lem}\label{domain}
Consider the function $h:\Bbb R\longrightarrow\Bbb R$ defined by $h(u)=e^{u^2}-Cu^2,$ where $ C>0. $ Then
\begin{enumerate}
\item If $0<C<e,$ then $h(u)>0,\ \forall u\in \Bbb R.$ 
\item If $C=e,$ then $h(u)>0,\ \forall u\ne-1,1.$ 
\item If $C>e,$ then there exist $0<u_1<1< u_2,$ such that $h(u)>0,\ \forall u\in (-\infty, -u_2)\cup(-u_1,u_1)\cup(u_2,+\infty).$ 
\end{enumerate}
\end{lem}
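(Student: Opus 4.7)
The plan is to exploit the evenness of $h$ (since $h(-u)=h(u)$) and reduce the problem to the study of $h$ on $[0,\infty)$, where $h(0)=1>0$ and $h(u)\to+\infty$ as $u\to+\infty$. The derivative factors as
\[
h'(u)=2u\bigl(e^{u^2}-C\bigr),
\]
so on $(0,\infty)$ the only possible critical point is $u_\ast:=\sqrt{\ln C}$, which exists precisely when $C\ge 1$. At that point
\[
h(u_\ast)=e^{\ln C}-C\ln C=C(1-\ln C),
\]
and whenever $u_\ast$ exists, the factorization of $h'$ shows it is the unique minimizer of $h$ on $[0,\infty)$.

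For part (1), if $0<C\le 1$ then $h'\ge 0$ on $[0,\infty)$, so $h\ge h(0)=1>0$. If $1<C<e$ the minimum value $C(1-\ln C)$ is still strictly positive, so $h>0$ throughout $[0,\infty)$, and by evenness on all of $\Bbb R$. For part (2), at $C=e$ we get $u_\ast=1$ and $h(u_\ast)=e(1-\ln e)=0$, while $h>0$ at every other point of $[0,\infty)$; evenness then gives $h(u)>0$ for $u\ne\pm 1$.

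For part (3), when $C>e$ the minimum value $C(1-\ln C)$ is strictly negative. Combining $h(0)=1>0$, $h(u_\ast)<0$, $h(u)\to\infty$, and the strict monotonicity of $h$ on $(0,u_\ast)$ and on $(u_\ast,\infty)$, the intermediate value theorem delivers exactly two positive zeros $u_1<u_\ast<u_2$ with $h<0$ on $(u_1,u_2)$ and $h>0$ on $[0,u_1)\cup(u_2,\infty)$. To locate these zeros relative to $1$, I would simply compute $h(1)=e-C<0$, which by the sign pattern just established forces $u_1<1<u_2$. Reflecting through the $h$-axis then yields the advertised positivity set.

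The main (and quite minor) obstacle is merely careful bookkeeping of the sign of $h$ as $C$ crosses the two thresholds $C=1$ (at which the interior critical point appears) and $C=e$ (at which the value at that critical point changes sign); once the closed form $h(u_\ast)=C(1-\ln C)$ and the quick test $h(1)=e-C$ are in hand, no real analytic difficulty remains.
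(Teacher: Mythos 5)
Your proof is correct and follows essentially the same route as the paper: reduce to $u\ge 0$ by evenness, factor $h'(u)=2u(e^{u^2}-C)$, and analyze the sign of the minimum value $C(1-\ln C)$ at $u_\ast=\sqrt{\ln C}$ across the thresholds $C=1$ and $C=e$. Your explicit check $h(1)=e-C<0$ to justify $u_1<1<u_2$ in part (3) is a small detail the paper leaves implicit, but the argument is otherwise identical.
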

\begin{proof}
Because the function $h$ is even, we just consider the case $u\ge 0.$ Taking the derivative of the function, we obtain
$$ h'(u)=2u\left(e^{u^2}-C\right).$$ 

\begin{enumerate}
 \item If  $ C\le 1,$ then $ h'(u)>0, \forall u>0.$ The function $h$ is  monotonically increasing  and therefore $ h(u)>0, \forall u\ge 0.$ Note that  $h(0)=1.$ 
\item If $C>1,$  the function has the only minimum point at $u=\sqrt{\ln C}$ and $h(\sqrt{\ln C})=C-C\ln C.$ We consider the following subcases.
\begin{itemize}
\item The case $1<C<e.$ Because $h(\sqrt{\ln C})=C-\ln C>0,$ \ $ h(u)>0, \forall u\ge 0.$
\item  The case $C=e.$ 	We can see that  $ h(u)>0, \forall u\ne 1$ and $h(1)=0.$
\item The case $C>e.$ Because $h(\sqrt{\ln C})=C-C\ln C<0,$ there exist two values  $0<u_1<1 <u_2$ such that \ $h(u_1)=h(u_2)=0,$ \ $h(u)>0, \ \forall u \notin [u_1, u_2]$ and $h(u)\le0, \ \forall u \in [u_1, u_2].$
\end{itemize}
\end{enumerate}

\begin{center}
	\includegraphics{BienThienLorent-1}
\end{center}
\end{proof}
By Lemma \ref{domain}, the domain $D$   and $u_0$ are chosen as follows.
\begin{enumerate}
\item If $0<C<e,$ then $D=\Bbb R$ and $u_0=0.$ 
\item If $C=e,$ then $D=(-\infty, -1), \ u_0=-1$ or $D=(-1,1),\ u_0=0$ or or $D=(1, +\infty), u_0=1.$
\item If $C>e,$ then $D=(-\infty, -u_2), \ u_0=-u_2$ or $D=(-u_1,u_1), \ u_0=0;$ or $D=(u_2, +\infty), u_0=u_2.$
\end {enumerate}
 Rotate the curve about the $z$-axis, we obtain a surface of revolution, denoted by  $\Sigma_T,$ that can be parametrized as follows.
$$	X(u,v)=\left(u\cos v, u\sin v, \int_{u_0}^u {\sqrt{\dfrac{e^{\tau^2}}{e^{\tau^2}-C\tau^2}}d\tau }\right).$$
 By a direct computation, it follows that the curve is timelike, $\Sigma_T$ is timelike. Moreover 
 $\Sigma_T$ is timelike $f$-minimal. We call $\Sigma_T$ a timelike $f$-Catenoid.


\begin{center}
	\includegraphics[width=7cm]{DuongsinhLorentz-1}\\
\vskip.3cm
	\textit{Figure 3. Generatrices corresponding to $ C=2, 1.5, 1, 0.5$ and the line $x=z,$ respectively}
\end{center}

\begin{center}
		\includegraphics[width=4cm]{C2P5-1}\hspace*{2cm}
		 \includegraphics[width=4cm]{C2P5-2}\\
\vskip .1cm
	\textit{Figure 4. The generatrix and the corresponding timelike $f$-minimal surface ($C<e$)}
		
	\end{center}


\begin{center}
	\includegraphics[width=4cm]{CBangE_e-1}\hspace*{1cm}	\includegraphics[width=3cm]{CBangE_e-2}\\
	\vskip .5cm
	\textit{Figure 5. The generatrix and the corresponding timelike $f$-minimal surface ($C=e,\ D=(-1,1)$)}
		\end{center}

\vskip 1cm
\begin{center}
	\includegraphics[width=6cm]{CBangE_e-3}\hskip1cm  
	\includegraphics[width=6cm]{CBangE_e-4}
	\vskip .5cm
	\textit{Figure 6. The generatrix and the corresponding timelike $f$-minimal surface ($C=e,\ D=(-\infty,-1)\cup(1, \infty)$)}
\end{center}

\begin{center}
	\includegraphics[width=6cm]{C3P1-1}\hspace*{0.5cm}
	\includegraphics[width=4cm]{C3P1-2}\\	
		\vskip .5cm
	\textit{Figure 7. The generatrix and the corresponding timelike $f$-minimal surface ($C=3.1,\ D=(-0.75,0.75)$)}
\end{center}
	\vskip .3cm
\begin{center}
	\includegraphics[width=5cm]{C3P1-8}\hspace*{0.5cm}
	\includegraphics[width=5cm]{C3P1-5}\\
		\vskip .5cm
	\textit{Figure 8. The generatrix and the corresponding timelike $f$-minimal surface ($C=3.1,\ D=(1.267,3)$)}
\end{center}

\begin{rem}
\begin{enumerate}
\item If $C=e,$ the integral $\displaystyle\int_{0}^{1}\sqrt{\dfrac{e^{\tau^2}}{e^{\tau^2}-C\tau^2}}d\tau $ is divergence. 
\begin{center}
	\includegraphics[width=10cm]{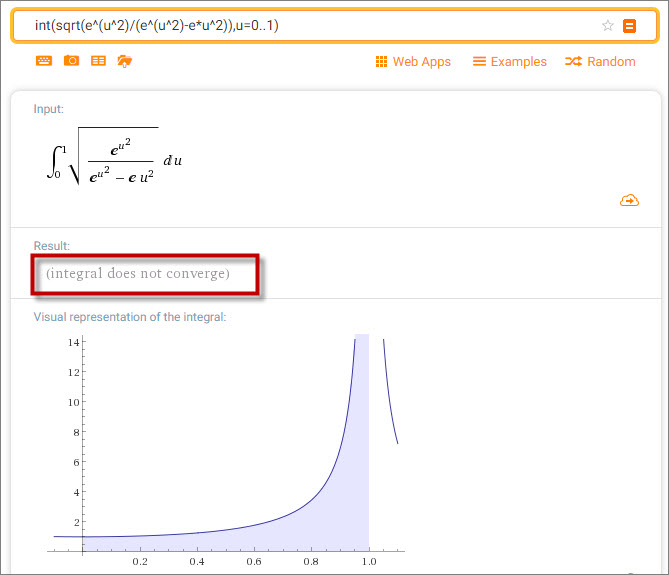}
	\vskip .3cm
	\textit{The divergence of  the integral is  showed by WolframAlpha}
\end{center}
\item If $ C>e= 2.718281828...,$ let $ u_1<1<u_2$ are solutions of the equation $ e^{u^2}-Cu^2=0,$ $\displaystyle I_1=\int_{0}^{u_1}\sqrt{\dfrac{e^{\tau^2}}{e^{\tau^2}-Cu^2}}d\tau $
and $\displaystyle I_2=\int_{u_2}^{4}\sqrt{\dfrac{e^{\tau^2}}{e^{\tau^2}-Cu^2}}d\tau.$ The following table computed by Maple gives us some specific values. We can see that the intergal $I_1$ is convergence. When $C$ goes to $\infty$ both $u_1$ and $I_1$ goes to 0. 
 \vskip 1cm
 \begin{tabular}{c|cccccccc}
 	$C$ &2.72&2.725 &2.73 &2.74 & 2.75\\
 	\hline
 	$u_1$&0.9822782644&0.9650767467 & 0.9539872965 &0.9375982937 &0.9248309636\\ 
 	\hline
 	$u_2$&1.017827051& 1.035334695 &1.046729750  & 1.063728407&1.077103331\\
 	\hline
 	$I_1$& 3.39646&2.913861676 &2.716714471 & 2.497786122 &2.363204279 \\
 	\hline
 	$I_2$&5.63324 &5.150340780 &4.952913186 & 4.733425412 &4.598285949
 \end{tabular}
 
 
\begin{tabular}{c|cccccccc}
	$C$ &2.8  & 2.9 & 3.0 & 3.1 & 3.2\\
	\hline
	$u_1$&0.8808758710 & 0.8258522408 & 0.7868044780 &0.7556136794 & 0.7293528965\\ 
	\hline
	$u_2$&1.124065962 & 1.184957908 & 1.229688803 &1.266389104 & 1.297996253\\
	\hline
	$I_1$& 2.026032087  & 1.740252676 & 1.583315220 & 1.474783161&1.391943189\\
	\hline
	$I_2$&4.258352255 & 3.967178286 & 3.805008572 & 3.691396684 &3.603620332
\end{tabular}


\begin{tabular}{c|cccccccc}
	$C$ &4 & 5 & 6 &7 &8 \\
	\hline
	$u_1$& 0.5978318795&0.5090885010 & 0.4521962510 & 0.4113302857 & 0.3800280951\\
	\hline
	$u_2$& 1.467410087 & 1.594566197 & 1.683195738 & 1.751120026&1.806013755\\
	\hline
	$I_1$&  1.051169565& 0.8629326647 & 0.7526390978 & 0.6770807650&0.6208959900\\
	\hline
	$I_2$& 3.227665154& 3.003422822 &2.863129355 &2.761819500 &2.683055126
\end{tabular}


\begin{tabular}{c|cccccccc}
	$C$ &10 &20 &  30 &40 &50\\
	\hline
	$u_1$& 0.3344137545&  0.2295778377 &0.1857512382 & 0.1601547153 & 0.1428721249\\
	\hline
	$u_2$& 1.891336053 & 2.121262664& 2.239037675 & 2.317248453 &2.375356389\\
	\hline
	$I_1$&  0.5412131026 & 0.3655508690 & 0.2943554575 &0.2532125983&0.2255845814\\
	\hline
	$I_2$&  2.565108172 & 2.266933975 & 2.122055409 &2.027988230 &1.959035117
\end{tabular}


\begin{tabular}{c|cccccccc}
	$C$ &100 &200 &  300 &400 &500\\
	\hline
	$u_1$&  0.1005063540 & 0.07088856878 & 0.05783165509&0.05006269611&0.04476619308\\
	\hline
	$u_2$&  2.544164917 & 2.698888129 &2.784186390&2.842705995 &2.887054100\\
	\hline
	$I_1$& 0.1582764960 & 0.1114918791 & 0.09091788154&0.07868765650 & 0.07035385008\\
	\hline
	$I_2$&   1.762485425 &1.586306978 & 1.490482830 & 1.425201168 &1.375955282
\end{tabular}
\end{enumerate}
\end{rem}


\subsection{Classification of timelike $f$-minimal surfaces of revolution in $\Bbb G^2\times \Bbb R_1$}
Let $\Sigma$ be a timelike surface of revolution with the generatrix $\gamma$ and the rotation axis $l.$  
Since $\gamma$ is timelike, it can be expressed as below 
$$\gamma(u)=(g(u), 0,u).$$
 As in the case $\Sigma$ is spacelike, a local parametrization of $\Sigma$ as well as $\langle \nabla f, N\rangle$ can be computed as follows. 
\begin{lem}\label{tf} 
\begin{enumerate}
\item If the rotation axis is spacelike 
\begin{align*}
	X(u,v)=(g(u)\cosh\theta+u\sinh\theta\cosh v,u\sinh v+a,g(u)\sinh\theta+u\cosh\theta\cosh v).
\end{align*}
\begin{align}\label {nts}
\langle \nabla f, N\rangle&= \frac1{\sqrt{[g'(u)]^2-1}}[(g(u)g'(u)+u)\sinh\theta\cosh\theta\cosh v + g(u)\cosh^2\theta+ug'(u)\sinh^2\theta\cosh^2 \nonumber\\
&\quad +(g(u)\sinh v+a)(g'(u)\sinh v)].
\end{align}
\item If the rotation axis is lightlike
\begin{align*}
X(u,v)=\left(g(u)+\left[u-g(u)\right]\dfrac{v^2}{2}+a,-v[u-g(u)],u+\left[u-g(u)\right]\dfrac{v^2}{2}\right)\end{align*}
\begin{align}
\label{ntl}
\langle \nabla f, N\rangle&=\dfrac{1}{\sqrt{1-[g'(u)]^2}}\left[ ug'(u)(1+\sin^2v\sinh^2\theta)+[u+g(u)g'(u)]\dfrac{\sinh2\theta}{2}\sin v+g(u)\sinh^2\theta\right]\nonumber\\
&\quad+\dfrac{ag'(u)\cos v}{\sqrt{1-[g'(u)]^2}}.
\end{align}

\item If the rotation axis is timelike
\begin{align*}
X(u,v)=(g(u)\cosh\theta\sin v + u\sinh\theta, g(u)\cos v+a, g(u)\sinh\theta\sin v+u\cosh\theta).
\end{align*}
\end{enumerate}
\begin{align} \label{ntt}
	\langle \nabla f, N\rangle&= \frac 1{\sqrt{[g'(u)]^2-1}}[(g(u)g'(u)+u)\sinh\theta\cosh\theta\sin v + g(u)\cosh^2\theta\sin^2v\nonumber\\
	&+g(u)\cos^2v+ug'(u)\sinh^2\theta+a\cos v].
\end{align}
\end{lem}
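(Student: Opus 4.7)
My plan is to follow the same template as the proof of Lemma~\ref{sf}. Since the $f$-mean curvature, and more generally the formal structure of $\langle \nabla f,N\rangle$, is unchanged by rotations about and translations along the $z$-axis, in each of the three cases I would first normalize the axis $l$ to a canonical position: in the spacelike case, a line through $(0,a,0)$ making angle $\theta$ with $Ox$ and lying in the $xz$-plane; in the lightlike case, a line through $(a,0,0)$ parallel to $e_1+e_3$ inside the $xz$-plane; in the timelike case, a line through $(0,a,0)$ making angle $\theta$ with $Oz$, still in the $xz$-plane. Because $\Sigma$ is timelike, its generatrix $\gamma$ is timelike, so I may write $\gamma(u)=(g(u),0,u)$.

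Next, I would build each parametrization in two steps. First, rotate $\gamma$ about the corresponding \emph{canonical} axis (the $x$-axis in case~1, the lightlike line $x=z$, $y=0$ in case~2, the $z$-axis in case~3) to obtain an auxiliary timelike surface $\Sigma_1$ with an immediate parametrization. Second, apply the composition of the appropriate rotation matrix from Section~2 with the translation $(0,a,0)$, respectively $(a,0,0)$, to carry $\Sigma_1$ back to $\Sigma$. Carrying out the matrix product produces the three parametrizations stated in the lemma, in the same way Lemma~\ref{sf} is derived, only with the roles of the $x$- and $z$-components of $\gamma$ swapped.

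For the formulas \eqref{nts}, \eqref{ntl}, \eqref{ntt}, the recipe is direct: compute $X_u$ and $X_v$ from each parametrization, form the Lorentzian cross product $X_u\wedge X_v$ via the determinant from Section~2, normalize by $\sqrt{|EG-F^2|}$ to obtain a unit normal $N$, and then evaluate $\langle \nabla f, N\rangle$ at $X(u,v)$ using $\nabla f = (x,y,0)$. Since the translation parameter $a$ does not affect $X_u$, $X_v$, or $N$, it enters only through the inner product step, appearing as the final $a$-term in each of the three expressions, which matches the stated formulas.

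The main obstacle is purely computational bookkeeping. Sign conventions are delicate: the timelike nature of $\gamma$ flips a sign relative to the spacelike setup of Lemma~\ref{sf}; the Lorentzian cross product has an extra minus in the third slot relative to the Euclidean one; and the splitting of mixed hyperbolic terms into $\sinh^2\theta$, $\cosh^2\theta$ and $\tfrac{1}{2}\sinh 2\theta$ pieces must be carried out carefully to reproduce the stated forms. No new geometric idea beyond what already appears in the proof of Lemma~\ref{sf} is required.
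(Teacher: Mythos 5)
Your proposal is correct and follows essentially the same route as the paper: the paper gives no separate argument for Lemma~\ref{tf} beyond the remark that it is obtained ``as in the case $\Sigma$ is spacelike,'' i.e., by writing the timelike generatrix as $\gamma(u)=(g(u),0,u)$, applying the rotation matrices from Section~2 composed with the appropriate translation, and then computing $N$ via the Lorentzian cross product and pairing with $\nabla f=(x,y,0)$. Your outline reproduces exactly this computation, including the observation that the parameter $a$ enters only through the final inner product.
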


\begin{thm}\label{thm3} 
A zero  $f$-mean curvature timelike surface of revolution in $\Bbb G^2\times\Bbb R$ is either a vertical plane containing the $z$-axis, the cylinder $x^2+y^2=1,$ or a timelike $f$-Catenoid.
\end{thm}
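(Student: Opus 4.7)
The proof will follow the same outline as Theorem \ref{thm1}. Since Lorentzian rotations about the rotation axis $l$ preserve the ordinary mean curvature, $H$ is $v$-independent along each coordinate orbit $u=u_0$ of the parametrizations in Lemma \ref{tf}; hence $H_f\equiv 0$ forces $\langle\nabla f,N\rangle$ to be $v$-independent as well, and the main task is to solve $\partial_v\langle\nabla f,N\rangle\equiv 0$ in each of the three axis cases.

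For the lightlike axis, expanding $\langle\nabla f,N\rangle$ for the parametrization in Case 2 of Lemma \ref{tf} yields a polynomial in $v$ of degree four whose $v^4$ coefficient is a nonzero multiple of $(u-g)(1-g')$; linear independence of $1,v^2,v^4$ then forces this coefficient to vanish pointwise, which makes the generatrix lightlike rather than timelike. The lightlike case is thus excluded, exactly as in the corresponding step of Theorem \ref{thm1}. For the spacelike axis, the identity \eqref{nts} can be rewritten via hyperbolic double-angle identities as a linear combination of $1,\cosh v,\sinh v,\cosh 2v,\sinh 2v$ with $u$-dependent coefficients; setting the $\cosh v$, $\sinh v$, and $\cosh 2v$ coefficients to zero yields the system $(gg'+u)\sinh\theta\cosh\theta=0$, $ag'=0$, and $g'(u\sinh^2\theta+g)=0$. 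If $g'\not\equiv 0$ on some open interval these equations jointly force $g\equiv 0$ (after a short case distinction on $\theta$), so $g$ must be constant; the first equation then gives $\theta=0$, and the axis is the $x$-axis. With the reduced parametrization $X(u,v)=(c,u\sinh v+a,u\cosh v)$, a direct computation yields $H\equiv 0$ and $\langle\nabla f,N\rangle\equiv -c$, so $H_f=0$ forces $c=0$. The image lies in the plane $x=0$, and applying a rotation about the $z$-axis (under which $f$-minimality is invariant) produces every vertical plane containing the $z$-axis.

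For the timelike axis, the analogous expansion of \eqref{ntt} in $1,\sin v,\cos v,\sin 2v,\cos 2v$ again forces $\theta=a=0$, so the axis is the $z$-axis with parametrization $X(u,v)=(g(u)\sin v,g(u)\cos v,u)$. A direct computation gives
$$H=-\frac{1-g'^2+gg''}{2g(1-g'^2)^{3/2}},\qquad \langle\nabla f,N\rangle=\frac{g}{\sqrt{1-g'^2}},$$
so $H_f=0$ becomes the ODE $gg''+(1-g'^2)(1-g^2)=0$. The constant solution $g\equiv 1$ recovers the cylinder $x^2+y^2=1$. For non-constant $g$, treating $p=g'^2$ as a function of $g$ turns the ODE into $\dfrac{dp}{1-p}=\left(2g-\dfrac{2}{g}\right)dg$, which integrates to $g'^2=1-Cg^2 e^{-g^2}$ with $C>0$. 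Inverting to express $u$ as a function of $g$ reproduces the integral defining the generatrix $\gamma_T$ of Section \ref{4.1}, so the surface is a timelike $f$-Catenoid.

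The main obstacle is the bookkeeping in each axis case: separating $\langle\nabla f,N\rangle$ into genuinely linearly independent functions of $v$ (via double-angle identities in the spacelike and timelike cases and by polynomial degree in the lightlike case) and reading off the algebraic constraints on $\theta$, $a$ and $g$ from the vanishing of each Fourier-type coefficient. Once the axis has been reduced to the $x$- or the $z$-axis, the remaining analysis is a direct analogue of the corresponding step in Theorem \ref{thm1}, together with a separation-of-variables integration that produces the $f$-Catenoid generatrix.
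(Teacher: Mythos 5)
Your proposal is correct and follows essentially the same strategy as the paper: invariance of $H$ under the rotation reduces $H_f=0$ to the $v$-independence of $\langle\nabla f,N\rangle$, the lightlike and spacelike axis cases are eliminated or reduced to vertical planes containing the $z$-axis, and the timelike axis leads to a separable ODE whose non-trivial solutions are the timelike $f$-Catenoids. The only (harmless) deviation is in the last step, where you keep the generatrix as a graph $x=g(z)$ and obtain $gg''+(1-g'^2)(1-g^2)=0$, so that the cylinder $x^2+y^2=1$ appears as the constant solution $g\equiv 1$, whereas the paper switches to the graph $z=g(r)$ over the radius and disposes of the vertical-line generatrix (the cylinder) separately via Lemma \ref{lem1}; both integrations produce the same generatrix as in Section \ref{4.1}.
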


\begin{proof}
It is not hard to check that (\ref{ntl}) can not be constant, (\ref{nts}) is constant if and only if $\theta=0$ and $g'=0$ and (\ref{ntt}) is constant if and only if $\theta=a=0.$ That means if $l$ is lightlike, $\Sigma$ must be a vertical plane. By Lemma (\ref{lem1}) such a plane is of zero $f$-mean curvature if and only if  the plane contains the $z$-axis. 
Now consider the case $l$ is timelike.  The condition $\theta=a=0$ means that $l$ is the $z$-axis.
If  $\gamma$ is a vertical line, then  $\Sigma$ is a circular cylinder. By Lemma (\ref{lem1}) a circular cylinder has non-zero $f$-mean curvature and only the cylinder $x^2+y^2=1$ is timelike $f$-minimal. Now consider the case $\gamma$ is not a vertical line. Locally we can parametrize $\Sigma$ as follows.
\begin{align}
X(u,v)=(u\sin v, u\cos v, g(u) ),\ \ 1-g'^2<0.
\end{align}

A direct computation shows that $\Sigma$ is timelike $f$-minimal if and only if $u$ is a solution of the following equation.
\begin{align}\label{tmse}
g''(u)u+g'(u)(1-g'^2(u))+u^2g'(u)(g'^2(u)-1)=0,\end{align}
Equation (\ref{tmse}) is equivalent to
\begin{align*}	g''(u)u=(1-u^2)g'(u)(g'^2(u)-1). \end{align*}
or

\begin{align}\label{tmse1}
	\dfrac{g''(u)}{g'(u)(g'^2(u)-1)}=\dfrac{1}{u}-u.\end{align}
	Intergrating both sides of (\ref{tmse1}), we obtain
\begin{align*}	\ln \dfrac{g'^2(u)-1}{g'^2(u)}=\ln u^2 -u^2 +C,\end{align*}
or 
\begin{align*}	\dfrac{g'^2(u)-1}{g'^2(u)}=\dfrac{Cu^2}{e^{u^2}}
\end{align*}
Hence,
\begin{align*}
	g'(u)=\pm \sqrt{\dfrac{e^{u^2}}{e^{u^2}-Cu^2}}, \ \ e^{u^2}-Cu^2>0.
\end{align*}
Therefore,
\begin{align*}
g(u)=\pm\int_{u_0}^u \sqrt{\dfrac{e^{\tau^2}}{e^{\tau^2}-C\tau^2}}d\tau + B,
\end{align*}
where $e^{u^2}-Cu^2>0$  and $B$ is a constant.
Depending on the value of $C,$ the domain $D$ as well as the initial point $u_0$ are chosen as in the Subsection \ref{4.1}. The surface $\Sigma$ is a timelike $f$-catenoid.
\end{proof}


{\bf Acknowledgements.}  This research is funded by Vietnam National Foundation for Science and Technology Development (NAFOSTED) under grant number 101.04.2014.26.


\end{document}